\newtheorem{theorem}{Theorem}
\newtheorem{lemma}[theorem]{Lemma}
\newtheorem{corollary}[theorem]{Corollary}
\theoremstyle{nonumberplain}
\title{Restricted Cartan Type Lie Algebras and the Coadjoint Representation}
\author{Martin Mygind}
\date{\today}
\begin{document}
\maketitle

\begin{abstract}
Let $L$ be a restricted Cartan type Lie algebra over an algebraically closed field $k$ of characteristic $p>3$, and let $G$ denote the automorphism group of $L$. We prove that there are no nontrivial invariants of $L^*$ under the coadjoint action, i.e., $k[L^*]^G=k$. This property characterises the Cartan type algebras among the restricted simple Lie algebras. 
\end{abstract}

\section{Introduction} 
In his 1992 paper \cite{P} Premet showed, among many other things, that the invariant theory of the Witt-Jacobson algebras $W(n)$ is largely analogous to the corresponding theory for simple Lie algebras in characteristic zero. Namely, the algebra of invariants $k[W(n)]^G$ under the action of the automorphism group $G$ of $W(n)$ is generated by $n$ algebraically independent polynomials, and is thus isomorphic to a polynomial ring in $n$ variables. Chevalley's Restriction Theorem asserts that the same is true for any simple Lie algebra in characteristic zero, if we replace the automorphism group by the adjoint group. Recently, Premet's results were extended to other restricted Lie algebras of Cartan type in \cite{BFS} by introducing a notion of Weyl group for these algebras.

Now, for any Lie algebra $L$, the automorphism group of $L$ also acts on the dual space $L^*$ in a canonical way. For semisimple $L$ in characteristic zero the Killing form induces a $G$-isomorphism between $L$ and $L^*$, so here the problem of finding the invariants of $L^*$ under the $G$-action is reduced to the known calculation of $k[L]^G$. Not so for a restricted Lie algebra $L$ of Cartan type, where the Killing form is identically zero. Here we prove that there are \emph{no} nontrivial invariants of $L^*$, which of course represents a radical departure from the characteristic zero theory (this was proved for the simplest case $L=W(1)$ in \cite{M}).  

\section{Restricted Cartan Type Lie Algebras}
We will briefly review the basic definitions and results regarding restricted Cartan type Lie algebras, following 2.8-2.11 in \cite{BGP} (which again refers to \cite{SF} for many proofs). Note, however, that \cite{BGP} takes a more general approach by using divided power algebras, whereas our approach is more concrete. The difference amounts to nothing more than a scaling of basis elements, so though some formulas might change, everything is essentially the same. Let $k$ be an algebraically closed field of characteristic $p>3$, and let $A(n)=k[X_1,\dots ,X_n]/(X_1^p,\dots ,X_n^p)$ denote the truncated polynomial ring in $n$ variables over $k$. We write $x_i$ for the image of $X_i$ in $A(n)$. The $n$th Witt-Jacobson algebra $W(n)$ is defined as the Lie algebra of derivations of $A(n)$. It is restricted and simple, with the $p$-map being given by ordinary multiplication in $\text{End}(A(n))$: $\partial^{[p]}=\partial^p$ for all $\partial\in W(n)$. Furthermore, it is an $A(n)$-module in an obvious way, and has a standard basis $\{x_1^{\alpha_1}\dots x_n^{\alpha_n}\partial_i\ \vert \ 0\leq \alpha_j< p,1\leq i\leq n\}$ where $\partial_i$ denotes partial differentiation with respect to $x_i$. We will often use standard multi-index notation: For an $n$-tuple $\alpha=(\alpha_1,\dots,\alpha_n)$ with $0\leq \alpha_j< p$ for $1\leq j\leq n$, we write $x^\alpha$ for $x_1^{\alpha_1}\dots x_n^{\alpha_n}$ and define the degree of $x^\alpha$ to be $\vert \alpha \vert =\alpha_1+\dots + \alpha_n$. We denote by $\epsilon_j$ the $n$-tuple $(0,\dots,0,1,0,\dots,0)$ with 1 in the $j$'th place, and by $\tau$ the $n$-tuple $(p-1,\dots,p-1)$. The commutator in $W(n)$ is given by:
\begin{equation}\label{basic}
[x^\alpha\partial_i,x^\beta\partial_j]=\beta_ix^{\alpha+\beta-\epsilon_i}\partial_j-\alpha_jx^{\alpha+\beta-\epsilon_j}\partial_i
\end{equation}
An important tool in the study of $W(n)$ is the standard grading $W(n)=\bigoplus_{i=-1}^N W(n)_i$, where $N=n(p-1)-1$ and
\begin{equation}
W(n)_i=\sum_{j=1}^n\sum_{\vert \alpha\vert =i+1} kx^\alpha\partial_j
\end{equation}    
Every exterior power $\Omega^r_{A(n)/k}$ ($r\geq 0$) of the module of differentials of $A(n)$ over $k$ is a $W(n)$-module in a natural way, which makes it possible to define the following (restricted) subalgebras:
\begin{equation}\label{S}
S(n)=\{ \partial\in W(n) \ \vert \ \partial(dx_1\wedge dx_2\wedge\dots \wedge dx_n)=0\}
\end{equation} 
\begin{equation}\label{H}
H(2m)=\{ \partial\in W(2m) \ \vert \ \partial(\sum_{i=1}^m dx_i\wedge dx_{2m+1-i})=0\}
\end{equation}
\begin{equation}\label{K}
K(2m+1)=\{ \partial\in W(2m+1)\ \vert \ \partial(\omega_K)\in k\omega_K\}
\end{equation}
Here $\omega_K=\sum_{i=1}^m(x_idx_{2m+1-i}-x_{2m+1-i}dx_i)+dx_{2m+1}$. It turns out, that in all three cases a certain higher derived algebra (to be elaborated on) is simple and restricted, and the three families of simple restricted Lie algebras obtained in this way are known respectively as the special algebras, the Hamiltonian algebras and the contact algebras. Together with the Witt-Jacobson algebras they constitute the \textit{simple restricted Lie algebras of Cartan type}. \\
Let us now gather some facts about $S(n)$. Define linear maps $\text{div}:W(n)\to A(n)$ and $D_{ij}:A(n)\to W(n)$ ($1\leq i,j\leq n$) by
\begin{equation}
\text{div}(\partial)=\sum_{i=1}^n\partial_i(\partial(x_i))
\end{equation}
\begin{equation}
D_{ij}(f)=\partial_j(f)\partial_i-\partial_i(f)\partial_j
\end{equation} 
for all $\partial\in W(n)$ and $f\in A(n)$. A direct calculation shows that
\begin{equation}
\partial(dx_1\wedge\dots\wedge dx_n)=\text{div}(\partial)dx_1\wedge\dots\wedge dx_n
\end{equation}
for all $\partial\in W(n)$, which implies $S(n)=\{ \partial\in W(n)\ \vert \ \text{div}(\partial)=0\}$. Using this alternative definition it is easy to see that the images of the $D_{ij}$ are contained in $S(n)$. In fact, it can be shown that:
\begin{equation}
S(n)=\sum_{i,j} D_{ij}(A(n))\oplus \bigoplus_{i=1}^n kx^{\tau-(p-1)\epsilon_i}\partial_i
\end{equation}
Furthermore, it turns out that $\sum_{i,j} D_{ij}(A(n))$ is equal to the derived algebra $S(n)^{(1)}$ of $S(n)$. If $n\geq 3$, then $S(n)^{(1)}$ is restricted and simple, but for $n=2$ the second derived algebra $S(n)^{(2)}$ is a proper ideal of $S(n)^{(1)}$, since the element $D_{12}(x_1^{p-1}x_2^{p-1})$ is not contained in the former. However, $S(n)^{(2)}=D_{12}(\sum_{\vert \alpha\vert <2p-2} kx^{\alpha})$ is restricted and simple, and we define the $n$th \textit{special algebra} to be $S(n)^{(1+\delta_{n2})}$. It is not hard to see that $S(1)$ is one-dimensional and therefore not terribly interesting, so in the following we will always assume $n\geq 2$ when looking at the special algebras.

Let us move on to the family $H(2m)$. Note first that $H(2)=S(2)$, so it makes no harm to assume $m\geq 2$. For a number $i\in\{1,\dots,2m\}$ we set $i'=2m+1-i$ and
\begin{equation}   
\sigma(i)=\begin{cases} 1 & \text{if $1\leq i\leq m$} \\
-1 & \text{if $m+1\leq i\leq 2m$}
\end{cases}
\end{equation}
For $\partial=\sum_{i=1}^{2m}f_i\partial_i\in W(2m)$ the condition in (\ref{H}) can be shown to be equivalent to
\begin{equation}\label{Hcon}
\sigma(i)\partial_j(f_i)=\sigma(j')\partial_{i'}(f_{j'})
\end{equation}
for all $1\leq i,j\leq 2m$. Using this equation it is not hard to see that the image of the linear map $D_H:A(2m)\to W(2m)$ defined by 
\begin{equation}
D_H(f)=\sum_{i=1}^{2m}\sigma(i)\partial_i(f)\partial_{i'}
\end{equation}
is contained in $H(2m)$. It turns out that $H(2m)^{(1)}=D_H(\sum_{\vert\alpha\vert <2m(p-1)}kx^\alpha)$. This subalgebra is simple and restricted, and we call it a \textit{Hamiltonian algebra}. 

The special and Hamiltonian algebras are easily seen to be \text{graded} subalgebras of the corresponding Witt-Jacobson algebra, but this is not true for the last family, the contact algebras: Define a linear map $D_K:A(2m+1)\to W(2m+1)$ by $D_K(f)=\sum_{i=1}^{2m+1}f_i\partial_i$, where:
\begin{equation}
f_i=x_i\partial_{2m+1}(f)+\sigma(i')\partial_{i'}(f)\ \ \ \ \ \text{for}\ 1\leq i\leq 2m
\end{equation}
\begin{equation}
f_{2m+1}=2f-\sum_{j=1}^{2m}x_j\partial_j(f)
\end{equation}
Furthermore, we define $\Delta(f)=2f-\sum_{j=1}^{2m}x_j\partial_j(f)$ and 
\begin{equation}
\langle f,g\rangle=\Delta(f)\partial_{2m+1}(g)-\Delta(g)\partial_{2m+1}(f)+\sum_{j=1}^{2m}\sigma(j)\partial_j(f)\partial_{j'}(g)
\end{equation}
A basic calculation proves the commutation formula $[D_K(f),D_K(g)]=D_K(\langle f,g\rangle)$. It turns out that the image of $D_K$ is exactly $K(2m+1)$. Grading $A(2m+1)$ by $\text{deg}(x^\alpha)=\vert\vert \alpha\vert\vert=\vert\alpha\vert+\alpha_{2m+1}-2$ induces a grading on $K(2m+1)$ via
\begin{equation}
K(2m+1)_j=\text{span}\{D_K(x^\alpha) \ \vert \ \text{deg}(x^\alpha)=j\}
\end{equation} 
The derived algebra $K(2m+1)^{(1)}$ is restricted and simple, and we call it a \textit{contact algebra}. It can be shown that $K(2m+1)^{(1)}=\text{span}\{ D_K(x^\alpha)\ \vert \ \alpha\neq \tau\}$ if $2m+4\equiv 0 \ \text{mod}\ p$ and $K(2m+1)^{(1)}=K(2m+1)$ otherwise. We will need a few formulas regarding the product $\langle \cdot,\cdot\rangle$ (compare \cite{BGP}, p. 57, but note that formula (v) there is not correct as stated):
\begin{equation}\label{1}
\langle 1,x^\alpha\rangle=\alpha_{2m+1}x^{\alpha-\epsilon_{2m+1}}
\end{equation}
\begin{equation}\label{2}
\langle x_i,x^\alpha\rangle =\sigma(i)\alpha_{i'}x^{\alpha-\epsilon_{i'}}+\alpha_{2m+1}x^{\alpha+\epsilon_i-\epsilon_{2m+1}}\ \ \ \text{for}\ 1\leq i\leq 2m
\end{equation}
\begin{equation}\label{3}
\langle x_{2m+1},x^\alpha\rangle =\vert\vert \alpha\vert\vert x^\alpha 
\end{equation}
\begin{equation}\label{4}
\langle x_ix_j,x^\alpha\rangle=\sigma(i)\alpha_{i'}x^{\alpha+\epsilon_j-\epsilon_{i'}}+\sigma(j)\alpha_{j'}x^{\alpha+\epsilon_i-\epsilon_{j'}}\ \ \ \text{for}\ 1\leq i,j\leq 2m 
\end{equation}
\begin{equation}\label{5}
\langle x_ix_{i'},x^\alpha\rangle=(\alpha_{i'}-\alpha_i)x^\alpha\ \ \ \text{for}\ 1\leq i\leq m
\end{equation}
From now on we will (by abuse of notation) write $W(n)$, $S(n)$, $H(n)$ and $K(n)$ for the corresponding simple derived subalgebra, with the convention that $n=2m$ for the Hamiltonian type and $n=2m+1$ for the contact type. We let $L$ denote an arbitrary restricted algebra of Cartan type and use the notation $\widehat{L}$ for the algebra from which it is derived (with the convention $\widehat{W(n)}=W(n))$. Then we have the following lemma, which sums up some of the most important information from the preceding discussion:
\begin{lemma}\label{start}
Let $A(n)$ be graded in the usual way if $L\in\{W,S,H\}$ and by $\textit{deg}(x^\beta)=\vert\vert\beta\vert\vert$ if $L\in\{K\}$. Then there exists a finite family of graded linear maps $D_{\alpha}:A(n)\to \widehat{L}$ such that $L$ is spanned by elements of the form $D_\alpha(x^\beta)$. The maps $\{D_\alpha\}$ are said to be associated to $L$. Furthermore, if $L\in\{W,S,H\}$ we have
\begin{equation}\label{dif}
(\mathrm{ad}\ \partial_s)\circ D_\alpha=D_\alpha\circ \partial_s
\end{equation}
for $1\leq s\leq n$ and all $\alpha$.
\end{lemma}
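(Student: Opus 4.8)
The statement consolidates the structure theory recalled above, so the plan is to make the family $\{D_\alpha\}$ explicit in each of the four cases and then verify the two assertions by direct inspection of the defining formulas.

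First I would exhibit the maps. For $L=W(n)$ take the $n$ maps $D_i\colon f\mapsto f\partial_i$; the standard basis shows at once that $W(n)$ is spanned by the $D_i(x^\beta)$. For $L=S(n)$ take the maps $D_{ij}$ for $1\le i,j\le n$: when $n\ge 3$ the identity $S(n)^{(1)}=\sum_{i,j}D_{ij}(A(n))$ recalled above shows $S(n)$ is spanned by the $D_{ij}(x^\beta)$, and when $n=2$ the description $S(2)^{(2)}=D_{12}\bigl(\sum_{|\alpha|<2p-2}kx^\alpha\bigr)$ does the same with the single map $D_{12}$. For $L=H(2m)$ take the single map $D_H$ and use $H(2m)^{(1)}=D_H\bigl(\sum_{|\alpha|<2m(p-1)}kx^\alpha\bigr)$; for $L=K(2m+1)$ take the single map $D_K$, whose image is $K(2m+1)$, and recall that $K(2m+1)^{(1)}$ is spanned by the $D_K(x^\alpha)$ with $\alpha$ ranging over the appropriate index set (all $\alpha$, resp.\ all $\alpha\neq\tau$).

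Next, gradedness: from the defining formula one reads off in each case a fixed degree shift --- $\deg D_i(x^\beta)=|\beta|-1$ and $\deg D_{ij}(x^\beta)=\deg D_H(x^\beta)=|\beta|-2$ for the standard grading, while for $K$ the grading $K(2m+1)_j=\mathrm{span}\{D_K(x^\alpha):\deg(x^\alpha)=j\}$ was defined precisely so that $D_K$ is graded of shift $0$. Hence each $D_\alpha$ carries homogeneous elements to homogeneous elements, and the relevant subalgebras are graded because they are spanned by such homogeneous images. Finally, for $L\in\{W,S,H\}$ I would prove (\ref{dif}) from the observation that $\mathrm{ad}\,\partial_s$ acts on a term $g\partial_i$ by differentiating the coefficient only, $[\partial_s,g\partial_i]=\partial_s(g)\partial_i$ (this is (\ref{basic}) with $\alpha=0$). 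Since $D_i,D_{ij},D_H$ are assembled from $f$ by applying partial derivatives and multiplying by basis derivations $\partial_m$, and since $\partial_s$ commutes with every $\partial_\ell$ as an operator on $A(n)$, applying $\mathrm{ad}\,\partial_s$ term by term simply replaces $f$ by $\partial_s(f)$; e.g.\ $[\partial_s,D_{ij}(f)]=\partial_j(\partial_s f)\partial_i-\partial_i(\partial_s f)\partial_j=D_{ij}(\partial_s f)$. No such identity is asserted for $K$, and it genuinely fails, since $D_K$ multiplies $f$ by the variables $x_i$ and $\mathrm{ad}\,\partial_s$ then picks up extra terms from differentiating those factors.

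There is no serious obstacle here; the calculations are routine. The only points that need care are bookkeeping ones: keeping the conventions consistent with the divided-power normalisation of \cite{BGP} (a harmless rescaling of basis vectors), and, in the special and contact cases, tracking the precise range of multi-indices $\beta$ for which $D_\alpha(x^\beta)$ lands in the simple algebra $L$ rather than merely in $\widehat{L}$.
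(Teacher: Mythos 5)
Your proposal is correct and follows essentially the same route as the paper: it exhibits the same associated maps ($D_i$, $D_{ij}$, $D_H$, $D_K$) and derives (\ref{dif}) from the identity $[\partial_s,g\partial_j]=\partial_s(g)\partial_j$, just as the paper does (the paper writes out the $H$ case where you write out the $S$ case, which is an immaterial difference). Your extra remarks on the degree shifts and on why (\ref{dif}) fails for type $K$ are accurate but not needed beyond what the paper records.
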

\begin{proof}
For $W(n)$ we can use the maps $D_i$ defined by $D_i(f)=f\partial_i$ for all $f\in A(n)$. For $S(n)$ we use the $D_{ij}$ with $i\neq j$, and for $H(n)$, $K(n)$ we use $D_H$, $D_K$ respectively. The identity (\ref{dif}) is an easy consequence of the formula $[\partial_s,x^\beta\partial_j]=\partial_s(x^\beta)\partial_j$, which follows from (\ref{basic}). For example, if $L$ is of type $H$, we get:
\begin{equation}
[\partial_s,D_H(f)]=\sum_{i=1}^{2m}\sigma(i)\partial_s(\partial_i(f))\partial_{i'}=\sum_{i=1}^{2m}\sigma(i)\partial_i(\partial_s(f))\partial_{i'}=D_H(\partial_s(f)).
\end{equation} 
\end{proof}
One final preliminary remark: The sum of components of negative degree in $L$ turns up in several of our proofs, so it is nice to have a concrete description (which can be derived from the information above): If $L\in\{ W,S,H\}$ then 
\begin{equation}
\bigoplus_{i<0} L_i=L_{-1}=\text{span}\{\partial_1,\dots,\partial_n\},
\end{equation}
and if $L\in\{K\}$ then
\begin{equation}
\bigoplus_{i<0}L_i=L_{-2}\oplus L_{-1}=\text{span}\{ D_K(1),D_K(x_1),\dots, D_K(x_{2m})\}.
\end{equation}

\section{The Coadjoint Representation}
Let $G$ denote the automorphism group of $L$, then we have a canonical action on $L^*$ given by
\begin{equation}
g.\chi(x)=\chi(g^{-1}(x))
\end{equation}
for all $g\in G$, $\chi\in L^*$ and $x\in L$. This is the \textit{coadjoint representation} of $G$. As mentioned in the introduction we aim to show that the invariant ring $k[L^*]^G$ is trivial, i.e., $k[L^*]^G=k$. For this, we need a few facts about $G$ (see \cite{W}). The following subgroups will be very important:
\begin{equation}
G_0=\{g\in G\ \vert \ g(L_i)=L_i\ \text{for all}\ i\}
\end{equation}
\begin{equation}
G_r=\{g\in G\ \vert \ g(x)-x\in L_{\geq r+i}\ \text{for all $i$ and all $x\in L_i$}\}
\end{equation}
We have $G=G_0\ltimes G_1$ with $G_0\cong GL_n$ if $L\in \{W,S\}$ and $G_0\cong CSp_{2m}$ if $L=H(2m)$ or $L=K(2m+1)$. If we define $L_{\geq i}=\bigoplus_{j\geq i}L_j$, then it is a consequence of the semidirect product decomposition $G=G_0\ltimes G_1$ that $g(L_{\geq i})=L_{\geq i}$ for all $g\in G$ and all $i$. The grading on $L$ induces a grading $L^*=\bigoplus_i L^*_i$ by setting $L^*_i=\{\chi\in L^*\ \vert\ \chi(L_j)=0\ \text{for all}\ j\neq i\}$. For any $\chi\in L^*$ we write $\chi_i$ for the component of $\chi$ of degree $i$ and $\chi_-$ for the sum of components of negative degree. Set $L^*_{\leq i}=\bigoplus_{j\leq i}L^*_j$, then it follows from $g(L_{\geq i})=L_{\geq i}$ and the definition of the coadjoint action, that $g(L^*_{\leq i})=L^*_{\leq i}$ for all $g\in G$ and all $i$.

Inside $G_0$ we have a copy of $k^*$ corresponding to the scalar matrices, and this subgroup turns out to be of crucial importance. An easy calculation shows that the action of $k^*$ is given by $t.\chi=t^{-i}\chi$ for all $t\in k^*$, $\chi\in L_i^*$. Another important ingredient in the proof of triviality of $k[L^*]^G$ is the following set:
\begin{equation}
Y=\{ \chi\in L^*\ \vert \ \text{there exists}\ g\in G\ \text{such that}\ (g.\chi)_-=0\}
\end{equation}
For the proof of the next lemma we will need an alternate grading on $L$, defined as follows: Grade $A(n)$ and $W(n)$ by $\text{deg}(x^\alpha)=\sum_{i=1}^n i\alpha_i$ and $\text{deg}(x^\alpha\partial_j)=\sum_{i=1}^n i\alpha_i-j$. Formula (\ref{basic}) shows that this grades $W(n)$ as a Lie algebra, and we write $W_{[s]}$ for the $s$th graded component. Looking at the definitions, we see that the associated maps $\{D_\alpha\}$ are all graded ($D_i$ of degree $-i$, $D_{ij}$ of degree $-i-j$ and $D_H,D_K$ of degree $-n$), which implies that we get a Lie algebra grading on $L$ by setting $L_{[s]}=L\cap W_{[s]}$. Furthermore, each $L_i$ is graded, i.e., $L_i=\bigoplus_s(L_i\cap L_{[s]})$.
  
\begin{lemma}\label{first} 
For every $\chi\in L_{\leq 1}^*\backslash L_{\leq 0}^*$ there exists $g\in G_2$ such that $g.\chi=\chi_0+\chi_1$.
\end{lemma}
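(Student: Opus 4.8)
The plan is to use $G_2$ to strip the negative-degree part off $\chi$, one graded piece at a time, with the hypothesis $\chi_1\neq 0$ as the engine. First I would dispatch the routine reduction: for $g\in G_2$ and $x\in L_i$ one has $g^{-1}(x)-x\in L_{\geq i+2}$, and $G$ preserves $L^*_{\leq 1}$, so since $\chi$ annihilates every component of degree $>1$ the formula $g.\chi(x)=\chi(g^{-1}x)$ gives $(g.\chi)_0=\chi_0$ and $(g.\chi)_1=\chi_1$ for \emph{every} $g\in G_2$. Thus the entire task is to produce $g\in G_2$ with $(g.\chi)_-=0$, where $\chi_-$ is supported on $L_{-1}=\operatorname{span}(\partial_1,\dots,\partial_n)$ in types $W,S,H$ and on $L_{-2}\oplus L_{-1}=\operatorname{span}(D_K(1),D_K(x_1),\dots,D_K(x_{2m}))$ in type $K$.

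The elementary moves are these: for $D$ homogeneous of degree $r\in\{2,3\}$, the group $G_r$ contains an element $g$ whose effect on $\chi$ coincides with that of the (formally written) $\exp(-\operatorname{ad}D)$. One cannot in general take $g=\exp(-\operatorname{ad}D)$ literally, since in characteristic $p$ the exponential of a nilpotent derivation of index $\geq p$ is undefined; but $\chi$ only probes degrees $\leq 1$, and in that range the relevant terms of $\exp(-\operatorname{ad}D)$ are just $\operatorname{id}$ and $-\operatorname{ad}D$ (no dangerous denominators occur), so the needed $g$ is furnished by the description of $G$ in \cite{W} — for $W(n)$, concretely by a coordinate change $x_l\mapsto x_l+(\text{cubic})$. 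Computing: applying such a $g$ with $D\in L_2$ adds the linear form $\chi_1([D,\,\cdot\,])$ to $\chi|_{L_{-1}}$ and the form $\chi_0([D,\,\cdot\,])$ to $\chi|_{L_{-2}}$ and nothing else, while with $D\in L_3$ it leaves $\chi|_{L_{-1}}$ fixed and adds $\chi_1([D,\,\cdot\,])$ to $\chi|_{L_{-2}}$. In particular no move disturbs $\chi_0$ or $\chi_1$.

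Now the construction, where the alternate grading $L_{[s]}$ enters. Since $\chi\notin L^*_{\leq 0}$ we have $\chi_1\neq 0$; let $s_0$ be the largest $s$ with $\chi_1$ nonzero on $L_1\cap L_{[s]}$. I would clear the negative-degree basis vectors above one at a time, in order of decreasing alternate degree. To clear $\partial_i$ (which has $\partial_i\in L_{[-i]}$), apply a move with some $D\in L_2\cap L_{[s_0+i]}$, so that $[D,\partial_i]\in L_1\cap L_{[s_0]}$, choosing $D$ so that $\chi_1([D,\partial_i])$ cancels the current value $\chi(\partial_i)$; since $[D,\partial_k]\in L_1\cap L_{[s_0+i-k]}$ with $s_0+i-k>s_0$ for every already-cleared $\partial_k$ (these have $k<i$), the maximality of $s_0$ keeps the cleared values at zero. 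The $D_K(x_j)$ in type $K$ are handled the same way with elements of $L_2$, at the cost of disturbing $\chi(D_K(1))$; the last vector, $D_K(1)$, has the lowest alternate degree and is cleared by an $L_3$-move, which cannot revive the $L_{-1}$ values since $[L_3,L_{-1}]\subseteq L_2$ is invisible to $\chi$. The composite of all the moves is the desired $g\in G_2$. What makes each step possible is that $D\mapsto\chi_1([D,\partial_i])$ is onto $k$ on $L_2\cap L_{[s_0+i]}$: this follows from $[L_2,\partial_i]=L_1$, which is $L_{[s]}$-graded and therefore refines to $[L_2\cap L_{[s_0+i]},\partial_i]=L_1\cap L_{[s_0]}$, a space on which $\chi_1$ is nonzero by the choice of $s_0$; the $K$-steps rest analogously on $[L_2,D_K(x_j)]=L_1$ and $[L_3,D_K(1)]=L_1$.

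I expect the crux to be exactly these bracket-surjectivity identities. For $W(n)$, $[L_2,\partial_i]=L_1$ is immediate from $[\partial_s,x^\gamma\partial_l]=\partial_s(x^\gamma)\partial_l$ together with the observation that $\partial_s$ maps the degree-$3$ monomials onto the degree-$2$ monomials — here $p>3$ is essential, so that the coefficients that occur are invertible. For $S(n)$ and $H(2m)$ this reduces, via the intertwining identity (\ref{dif}) of Lemma \ref{start}, to precisely that assertion about $\partial_s$ acting on $A(n)$. For $K(2m+1)$ it becomes a direct computation from (\ref{1}) and (\ref{2}): formula (\ref{1}) yields $[L_3,D_K(1)]=L_1$ at once, whereas for $[L_2,D_K(x_j)]=L_1$ the second term $\alpha_{2m+1}x^{\alpha+\epsilon_i-\epsilon_{2m+1}}$ of (\ref{2}) means the degree-$1$ monomials divisible by $x_{2m+1}$ must be produced separately from suitable $x^\alpha$, and this is where most of the work sits. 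A secondary point, the realization of the moves inside $G_2$ rather than as naive exponentials, is handled by the structure theory of $G$ and, as noted, costs nothing here.
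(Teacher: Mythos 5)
Your proposal is correct and follows essentially the same route as the paper: reduce to producing $g\in G_2$ with $(g.\chi)_-=0$, realize $\mathrm{ad}$ of degree-$2$ (and, for $D_K(1)$, degree-$3$) elements inside $G_2$ via Wilson's theorem, and exploit an extremal alternate degree ($s_0$ maximal in your version, the mirror image of the paper's minimal $t$ and maximal $l$) to make the clearing of $L_{-1}\oplus L_{-2}$ triangular. The only real variation is that you replace the paper's explicit witness $E=D(x^{\beta+\epsilon_l})$ — with its computation $[E,\partial_l]=-(\beta_l+1)x$ and the resulting type-$K$ case analysis around the extra term $\beta_nD_K(x^{\beta+\epsilon_l+\epsilon_{l'}-\epsilon_n})$ — by the graded surjectivity $[L_2\cap L_{[s_0+i]},\partial_i]=L_1\cap L_{[s_0]}$, which is indeed true for all four types (and slightly streamlines type $K$), though you flag rather than carry out its verification there.
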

\begin{proof} Let $\{D_\alpha\}$ be the maps associated to $L$. The core of the proof is an adaptation (and simplification) of Theorem 4.1(3) in \cite{HZ}. Assume $\chi\in L_{\leq 1}^*\backslash L_{\leq 0}^*$ and note that it is enough to find $g\in G_2$ such that $(g.\chi)_-=0$. For if $y\in L_{\geq 0}$ then $g^{-1}(y)-y\in L_{\geq 2}$, which implies that $g.\chi$ and $\chi$ agree on $L_{\geq 0}$, i.e., $g.\chi=\chi_0+\chi_1$. 

Choose $t$ minimal such that $\chi(L_1\cap L_{[t]})\neq 0$, then we can find an associated map $D$ and some $x^\beta\in A(n)$ such that $x=D(x^\beta)\in L_1\cap L_{[t]}$ and $\chi(x)\neq 0$. We deal first with the case $L\in\{ W,S,H\}$: If $\chi_-=0$ there is nothing to show, so assume otherwise and choose $l$ maximal with the property $\chi(\partial_l)\neq 0$. Define $E=D(x^{\beta+\epsilon_l})$, then we have $E\in L_2\cap L_{[t+l]}$, and according to Theorem 1 in \cite{W} we can find, for any $c\in k$, a $g\in G_2$ such that
\begin{equation}\label{WSH}
g^{-1}(\partial_s)-\partial_s-[cE,\partial_s]\in L_{\geq 2}
\end{equation}
for all $s$, $1\leq s\leq n$. This implies $g.\chi(\partial_s)=\chi(\partial_s)+c\chi([E,\partial_s])$. Since $[E,\partial_s]\in L_1\cap L_{[t+l-s]}$ we get $g.\chi(\partial_s)=\chi(\partial_s)=0$ if $s>l$, by minimality of $t$ and maximality of $l$. Notice also that:
\begin{equation}\label{27}
[E,\partial_l]=[D(x^{\beta+\epsilon_l}),\partial_l]=-D(\partial_l(x^{\beta+\epsilon_l}))=-(\beta_l+1)x
\end{equation}
Here the second equality follows from (\ref{dif}). Note that $\beta_l+1\neq 0$ because $\beta_l\leq 3$ (here the assumption $p>3$ comes into play), so the calculation implies $\chi([E,\partial_l])\neq 0$, which again implies that we can choose $c$ such that $g.\chi(\partial_l)=0$. Applying this process at most $n$ times and composing the $g$'s we find, we end up with a $g'\in G_2$ such that $(g'.\chi)_-=0$.

Now assume $L$ is a contact algebra, in which case $\bigoplus_{i<0}L_i=L_{-2}\oplus L_{-1}=\text{span}\{ D_K(1),D_K(x_1),\dots, D_K(x_{2m})\}$. If $\chi(L_{-1})\neq 0$ we choose $l$ \textit{minimal} such that $\chi(D_K(x_l))\neq 0$ and define $E=D_K(x^{\beta+\epsilon_{l'}})\in L_2\cap L_{[t+l']}$. Again we can find $g\in G_2$ such that (\ref{WSH}) holds with $D_K(x_s)$ in place of $\partial_s$, and it follows that $g.\chi(D_K(x_s))=\chi(D_K(x_s))+c\chi([E,D_K(x_s)])$. Since $[E,D_K(x_s)]\in L_1\cap L_{[t+l'-s']}$ we get $\chi(D_K(x_s))=0$ for $s<l$ because of the choice of $t$. Furthermore:
\begin{equation}\label{21}
[E,D_K(x_l)]=D_K(\langle x^{\beta+\epsilon_{l'}},x_l\rangle)=-\sigma(l)(\beta_{l'}+1)D_K(x^\beta)-\beta_nD_K(x^{\beta+\epsilon_l+\epsilon_{l'}-\epsilon_n})
\end{equation}
If $\chi(D_K(x^{\beta+\epsilon_l+\epsilon_{l'}-\epsilon_n}))=0$ the proof proceeds as in the first case. Otherwise we can replace $x^\beta$ by $x^{\beta+\epsilon_l+\epsilon_{l'}-\epsilon_n}\in L_1\cap L_{[t]}$ and repeat the process. The new $x^\beta$ satisfies $\beta_n=0$, so the last term in (\ref{21}) disappears and we can again proceed as in the first case. Now induction yields a $g'\in G_2$ such that $(g'.\chi)_{-1}=0$. Finally, let $E'=D_K(x^{\beta+\epsilon_n})\in L_3$. Then we can find $g''\in G_3$ such that:
\begin{align*}
&(g''g').\chi(D_K(1))=\\&g'.\chi(D_K(1))+cg'.\chi([E',D_K(1)])=g'.\chi(D_K(1))-(\beta_n+1)cg'.\chi(D_K(x^\beta))
\end{align*}
It is clear that we can again choose $c$ such that $(g''g').\chi(D_K(1))=0$, and since $((g''g').\chi)_{-1}=(g'.\chi)_{-1}=0$ (because $g''\in G_3$) we are done. 
\end{proof}

\begin{corollary}\label{kor}
For every $\chi\in L_{\leq 1}^*\backslash L_{\leq 0}^*$ we have $\{\chi_0+t\chi_1\ \vert \ t\in k^*\}\subseteq G.\chi$ and $\chi_0\in \overline{G.\chi}$.
\end{corollary}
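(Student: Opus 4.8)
The plan is to bootstrap from Lemma \ref{first} using nothing more than the scalar torus $k^*\subseteq G_0$. First I would note that the hypothesis $\chi\in L_{\leq 1}^*\backslash L_{\leq 0}^*$ says precisely that $\chi$ has no components in degrees $\geq 2$ and that $\chi_1\neq 0$. By Lemma \ref{first} there is $g\in G_2$ with $g.\chi=\chi_0+\chi_1$. Since $g.\chi\in G.\chi$ we have $G.(\chi_0+\chi_1)=G.\chi$, and the degree-$0$ and degree-$1$ components of $\chi_0+\chi_1$ are again $\chi_0$ and $\chi_1$; hence it is enough to prove both assertions with $\chi$ replaced by $\chi_0+\chi_1$. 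In other words, we may assume $\chi_-=0$ from the start.

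Next I would apply $t\in k^*\subseteq G_0\subseteq G$, using the formula $t.\psi=t^{-i}\psi$ for $\psi\in L_i^*$ recorded above. This gives
\[
t.(\chi_0+\chi_1)=\chi_0+t^{-1}\chi_1 ,
\]
and as $t$ ranges over $k^*$ so does $t^{-1}$, so $\{\chi_0+s\chi_1\mid s\in k^*\}=\{t.(\chi_0+\chi_1)\mid t\in k^*\}\subseteq G.(\chi_0+\chi_1)=G.\chi$. That settles the first claim.

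For the second, I would view $t\mapsto\chi_0+t\chi_1$ as the restriction to $k^*$ of a morphism of affine varieties $\set{A}^1\to L^*$. Its image on $k^*$ lies in $G.\chi$ by the previous step, and $k^*$ is Zariski-dense in $\set{A}^1$, so continuity of the morphism forces $\chi_0$, the image of $0$, into the closure of $\{\chi_0+s\chi_1\mid s\in k^*\}$, hence into $\overline{G.\chi}$.

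There is essentially no obstacle here: all the substantive work is in Lemma \ref{first}. The only points requiring care are getting the exponent in the $k^*$-action right (so that $\chi_0$ is fixed while $\chi_1$ is rescaled) and observing that the parametrisation $s\mapsto\chi_0+s\chi_1$ is a genuine morphism from $\set{A}^1$, which is what legitimises the passage to the Zariski closure.
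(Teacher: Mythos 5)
Your argument is correct and is exactly the paper's proof, merely written out in full: reduce to $\chi_0+\chi_1$ via Lemma \ref{first}, rescale $\chi_1$ with the torus $k^*\subseteq G_0$, and pass to the Zariski closure by letting the parameter tend to zero. No further comment is needed.
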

\begin{proof}
Use the action of $k^*$ on $\chi_0+\chi_1$ and take the limit as $t$ approaches zero.
\end{proof}

\begin{lemma}\label{vigtig}
The set $Y$ is dense in $L^*$.
\end{lemma}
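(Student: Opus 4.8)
The plan is to realise $Y$ as the image of a morphism of varieties and to prove that this morphism is dominant. Note first that $\chi\in Y$ if and only if $g.\chi\in L^*_{\ge 0}$ for some $g\in G$, so that $Y=G\cdot L^*_{\ge 0}$ is precisely the image of
\[
\mu\colon\ G\times L^*_{\ge 0}\longrightarrow L^*,\qquad (g,\psi)\longmapsto g.\psi .
\]
Since $G$ is connected (it is $G_0\ltimes G_1$ with $G_0$ reductive and $G_1$ unipotent) and $L^*_{\ge 0}$ is a vector space, the source is an irreducible variety; hence it suffices to show $\mu$ is dominant, because a dominant morphism onto the irreducible variety $L^*$ has dense image. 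To prove dominance I will exhibit one point of $G\times L^*_{\ge 0}$ at which $d\mu$ is surjective.

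Fix a nonzero $\lambda\in L^*_0$, viewed inside $L^*_{\ge 0}$, and work at the point $(e,\lambda)$. From the second factor the image of $d\mu_{(e,\lambda)}$ contains $T_\lambda L^*_{\ge 0}=L^*_{\ge 0}$, and from the first factor it contains $\{\,y\mapsto\lambda([y,x])\ :\ x\in L\,\}$ (the image at $\lambda$ of the infinitesimal coadjoint action), because $\mathrm{ad}(L)\subseteq\mathrm{Lie}(G)$. Identifying $L^*/L^*_{\ge 0}$ with $(L_{<0})^*$ by restriction of functionals — here $L_{<0}=\bigoplus_{i<0}L_i$ — we see it is enough to prove that
\[
\Theta\colon\ L\longrightarrow (L_{<0})^*,\qquad x\longmapsto\bigl(y\mapsto\lambda([y,x])\bigr)
\]
is surjective.

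Because $\lambda$ is homogeneous of degree $0$ it vanishes on $L_{-1}$ and on $L_{-2}$, so for $x\in L_1$ one has $\Theta(x)(y)=\lambda([y,x])=0$ whenever $y\in L_{-2}$ (as $[L_{-2},L_1]\subseteq L_{-1}$), i.e.\ $\Theta(L_1)\subseteq L_{-1}^*$; I claim equality holds. If not, some nonzero $y_0\in L_{-1}$ satisfies $\lambda([y_0,L_1])=0$. Now $G_0$ acts transitively on $L_{-1}\setminus\{0\}$ — as $GL_n$ on $\mathrm{span}(\partial_1,\dots,\partial_n)$ for $L\in\{W,S\}$, and as $CSp_{2m}$ on $\mathrm{span}(\partial_1,\dots,\partial_{2m})$ for $L\in\{H,K\}$ — and $g_0[y_0,L_1]=[g_0y_0,g_0L_1]=[g_0y_0,L_1]$ for $g_0\in G_0$, so it is enough to know $[y_0,L_1]=L_0$ for a single $y_0$. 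Taking $y_0=\partial_1$ (resp.\ $y_0=D_K(x_1)$ in the contact case) this is a short computation: by (\ref{dif}), $[\partial_1,D_\alpha(x^\beta)]=D_\alpha(\partial_1 x^\beta)$ for $L\in\{W,S,H\}$, and $\partial_1$ carries the degree‑$2$ part of $A(n)$ onto the degree‑$1$ part, whence $[\partial_1,L_1]=L_0$; the contact case follows similarly from (\ref{2}). So $\lambda$ would vanish on $L_0$, contradicting $\lambda\ne 0$, and thus $\Theta(L_1)=L_{-1}^*$. If $L\in\{W,S,H\}$ then $L_{<0}=L_{-1}$ and we are done. In the contact case $L_{<0}=L_{-2}\oplus L_{-1}$ with $L_{-2}=k\,D_K(1)$, so it remains to hit $L_{-2}^*$: for $x\in L_2$ and $y\in L_{-2}$ we have $[y,x]\in L_0$, and by (\ref{1}), $[D_K(1),D_K(x^\beta)]=\beta_{2m+1}D_K(x^{\beta-\epsilon_{2m+1}})$; as $D_K(x^\beta)$ runs over $L_2$ the elements $D_K(x^{\beta-\epsilon_{2m+1}})$ exhaust $L_0$ (with $p>3$ the coefficients stay nonzero), so $\lambda\ne 0$ forces $\Theta(x)|_{L_{-2}}\ne 0$ for some $x\in L_2$. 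Combining this with $\Theta(L_1)=L_{-1}^*$, which absorbs any unwanted $L_{-1}$‑component, gives $\Theta(L_1\oplus L_2)=(L_{<0})^*$, and $\mu$ is dominant.

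The genuinely delicate input is the inclusion $\mathrm{ad}(L)\subseteq\mathrm{Lie}(G)$, guaranteeing that the coadjoint directions $x\mapsto(y\mapsto\lambda([y,x]))$ really occur in the image of $d\mu$; this is part of the structure theory of automorphism groups of restricted Cartan type algebras (\cite{W}, \cite{BGP}), and the relevant one‑parameter subgroups can be produced exactly as in the proof of Lemma \ref{first}, where the elements of $G_2$ furnished by \cite{W} realise the infinitesimal direction $\mathrm{ad}^*(E)$ for $E\in L_2$. The remaining points — transitivity of $G_0$ on $L_{-1}\setminus\{0\}$ and the explicit brackets $[\partial_1,L_1]=L_0$, $[D_K(x_1),L_1]=L_0$ and $[D_K(1),L_2]=L_0$ — are routine, the contact computations being the most laborious owing to the mixing of variables in $\langle\cdot\,,\cdot\rangle$.
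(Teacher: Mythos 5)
Your argument is correct in substance but follows a genuinely different route from the paper. You identify $Y$ as the image of $\mu\colon G\times L^*_{\geq 0}\to L^*$ and prove dominance by exhibiting surjectivity of $d\mu$ at the single point $(e,\lambda)$, $\lambda\in L_0^*\setminus\{0\}$, reducing everything to the bracket computations $[\partial_1,L_1]=L_0$ (resp.\ $[D_K(x_1),L_1]=L_0$, $[D_K(1),L_2]=L_0$) together with transitivity of $G_0$ on $L_{-1}\setminus\{0\}$; since the source is smooth, surjectivity of the differential at one point does force dominance even in characteristic $p$, so the logic is sound. The paper instead works at the ``top'' of the grading: it takes $y=\sum_i a_iD(x^{\tau-\epsilon_i})$, verifies $(\mathrm{ad}\,y)^2=0$ so that $g=\mathrm{id}+\mathrm{ad}\,y$ is an automorphism, and shows that the condition ``the matrix $\bigl(\chi(D(\partial_s(x^{\tau-\epsilon_i})))\bigr)$ is invertible'' cuts out a nonempty open subset of $L^*$ on which the $a_i$ can be solved for to kill $\chi_-$. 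The paper's method is more explicit (it produces a concrete open subset of $Y$ and never invokes tangent spaces or $\mathrm{Lie}(G)$) but must exclude $L=W(1)$, $p=5$, where $(\mathrm{ad}\,y)^2\neq 0$, and fall back on \cite{M}; your method handles that case uniformly and trades the linear-independence check of the matrix entries for the (comparably routine) bracket computations in degrees $\pm 1$, $\pm 2$.

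One statement you should correct: the inclusion $\mathrm{ad}(L)\subseteq\mathrm{Lie}(G)$, which you single out as the delicate input, is false for Cartan type algebras --- for instance $\dim\mathrm{Aut}(W(n))=n(p^n-1)<\dim W(n)$, and the directions $\mathrm{ad}(\partial_i)$ do not integrate to automorphisms of $A(n)$. What is true, and all that your argument uses, is $\mathrm{ad}(L_{\geq 1})\subseteq\mathrm{Lie}(G_1)$: the one-parameter families $c\mapsto g_c\in G_i$ attached to $E\in L_i$ ($i\geq 1$) by Theorem 1 of \cite{W}, exactly as invoked in Lemma \ref{first}, have derivative $\mathrm{ad}(E)$ at $c=0$, and your $\Theta$ is only ever applied to elements of $L_1\oplus L_2$. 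With the claim restated in that form the proof goes through.
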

\begin{proof}
Consider an element $y=\sum_{i=1}^n a_iD(x^{\tau-\epsilon_i})$ with $D=D_1$ if $L=W(n)$, $D=D_{12}$ if $L=S(n)$, $D=D_H$ if $L=H(n)$ and $D=D_K$ if $L=K(n)$ (the $a_i\in k$ are arbitrary). Using the grading on $L$ and the assumption $p\geq 5$ one checks that, with the exception of the case $L=W(1)$, $p=5$, we have $(\text{ad}\ y)^2=0$ and $[(\text{ad}\ y)(x_1),(\text{ad}\ y) (x_2)]=0$ for all $x_1,x_2\in L$, which implies that $g=\text{exp}(\text{ad}\ y)=\text{id}+\text{ad}\ y$ is an automorphism of $L$ (if $L=W(1)$ one can use the results on orbit representatives in \cite{M} to prove $Y=L^*$). We treat the case $L\in \{W,S,H\}$ first. Here we can use (\ref{dif}) to get:
\begin{equation}
g(\partial_s)=\partial_s+[y,\partial_s]=\partial_s-\sum_{i=1}^n a_iD(\partial_s(x^{\tau-\epsilon_i}))
\end{equation}
For any $\chi\in L^*$ we set $\chi(D(\partial_s(x^{\tau-\epsilon_i})))=b_{si}$. So we have:
\begin{equation}
g^{-1}.\chi(\partial_s)=\chi(\partial_s)-\sum_{i=1}^n a_ib_{si}
\end{equation}
If the $n\times n$ matrix $B=(b_{si})$ is invertible, then we can choose the $a_i$ such that $g^{-1}.\chi(\partial_s)=0$ for all $s$. The set $Y'\subseteq Y$ of all $\chi\in L^*$ that satisfies this condition is open, so we just have to show that it is nonempty: Define matrices $C=(\partial_s(x^{\tau-\epsilon_i}))_{s,i}$ and $B'=(D(\partial_s(x^{\tau-\epsilon_i})))_{s,i}$. Explicitly, we have:
\begin{equation*}
c_{si}=\begin{cases} (p-2)x^{\tau-2\epsilon_i} & \text{if $s=i$}\\
(p-1)x^{\tau-\epsilon_i-\epsilon_s} & \text{if $s\neq i$}
\end{cases}
\end{equation*} 
We see that $C$, and therefore also $B'$, is symmetric. Furthermore, the $c_{si}$ with $s\geq i$ are linearly independent, and since $D$ is injective on the ideal generated by $x_1x_2\dots x_n$ (can easily be checked case by case) which contains all the $c_{si}$, the elements on and below the diagonal of $B'$ must also be linearly independent. But then we can choose $\chi$ such that (say) $B=I$, and we are done. 

Now let $L$ be of type $K$ and write $z_s=D_K(x_{s'})$ for $1\leq s\leq 2m$, $z_n=D_K(1)$. As before, we calculate (with the convention $x_{n'}=1$):
\begin{align*}
&g(z_s)=z_s+[y,z_s]=\\&z_s-\sum_{i=1}^na_i[D_K(x_{s'}),D_K(x^{\tau-\epsilon_i})]=z_s-\sum_{i=1}^n a_i D_K(\langle x_{s'},x^{\tau-\epsilon_i}\rangle)
\end{align*}
We set $b_{si}=\chi(D_K(\langle x_{s'},x^{\tau-\epsilon_i} \rangle))$ for all $\chi\in L^*$, then it is again enough to find $\chi$ such that $B=(b_{si})$ is invertible: Consider first the matrix $C=(\langle x_{s'},x^{\tau-\epsilon_i} \rangle)_{s,i}$. Using (\ref{1}), (\ref{2}) and (\ref{3}) we get, for $1\leq s,i\leq 2m$,
\begin{equation*}
c_{si}=\begin{cases} \sigma(i')(p-2)x^{\tau-2\epsilon_i} & \text{if $s=i$} \\
\sigma(s')(p-1)x^{\tau-\epsilon_i-\epsilon_{s}}+\delta_{si'}(p-1)x^{\tau-\epsilon_n} & \text{if $s\neq i$}
\end{cases}
\end{equation*}
and:
\begin{equation*}
c_{sn}=\sigma(s')(p-1)x^{\tau-\epsilon_s-\epsilon_n}
\end{equation*}
\begin{equation*}
c_{ns}=(p-1)x^{\tau-\epsilon_s-\epsilon_n}
\end{equation*}
\begin{equation*}
c_{nn}=(p-2)x^{\tau-2\epsilon_n}
\end{equation*}
It is easy to see from these formulas that the $c_{si}$ with $s\geq i$ are linearly independent. But $D_K$ is injective (can be derived from \cite{SF}, Lemma 5.1), so the entries on and below the diagonal in the matrix $B'=(D_K(\langle x_{s'},x^{\tau-\epsilon_i}\rangle))_{s,i}$ are also linearly independent. Then we can choose $\chi$ such that $B$ has the form:

\[ \left(
\begin{matrix}
1 &\ & \Huge{*}\\
\ & \ddots & \ \\
\Huge{0} & \ & 1 
\end{matrix}
\right) \] \\
This matrix is clearly invertible.
\end{proof} 

\begin{lemma}\label{sidst} 
There exists $x\in L_1$ such that the map $(\mathrm{ad}\ x)_{\vert L_0}:L_0\to L_1$ is injective.
\end{lemma}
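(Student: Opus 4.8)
Since the statement only asserts existence, the plan is to exhibit an explicit $x\in L_1$ in each of the four types and to check directly, using the commutation formulas of Section~2, that $\mathrm{ad}\,x$ has no kernel on $L_0$; equivalently, that the annihilator $\{y\in L_0:[y,x]=0\}$ is zero. Throughout I will use that $L_0$ is, respectively, $\mathfrak{gl}_n$ (type $W$), $\mathfrak{sl}_n$ (type $S$), $\mathfrak{sp}_{2m}$ (type $H$) and $\mathfrak{csp}_{2m}$ (type $K$), and that, after unwinding the associated maps $D_\alpha$, the computation of $[A,x]$ for $A\in L_0$ comes down to the bracket formula (\ref{basic}) for types $W$, $S$, $H$ and to the product $\langle\,\cdot\,,\cdot\,\rangle$ for type $K$.

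For type $W$ I will take $x=\sum_{i=1}^{n}x_i^2\partial_i$. Writing $A=\sum_{p,q}a_{pq}x_p\partial_q$ and expanding $[A,x]$ by (\ref{basic}), one checks that for $p\neq q$ the coefficient of the basis vector $x_p^2\partial_q$ in $[A,x]$ is $-a_{pq}$; this kills all off-diagonal entries, after which the coefficient of $x_p^2\partial_p$ equals $a_{pp}$, so $A=0$. This is the model for the other types. For type $S$ the vector field $\sum x_i^2\partial_i$ is not divergence-free, so instead I will use
\[
x=\sum_{i=1}^{n}D_{i,i+1}\bigl(x_i^2 x_{i+1}\bigr)=\sum_{i=1}^{n}\bigl(x_i^2\partial_i-2x_ix_{i+1}\partial_{i+1}\bigr),
\]
with indices read cyclically modulo $n$; this lies in $S(n)$ because it is in the image of the $D_{ij}$. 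The expansion of $[A,x]$ now has more terms, but the same bookkeeping works: for $p\neq q$ the coefficient of $x_p^2\partial_q$ is a nonzero multiple of $a_{pq}$ — the multiple being $3$ precisely when $q\equiv p+1\pmod n$, which is where $p>3$ enters — and then the diagonal entries are read off exactly as before.

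For the Hamiltonian and contact types the natural replacement for the ``pointwise square'' vector field is the ``pointwise cube'' function: I will take $x=D_H\bigl(\sum_{i=1}^{2m}x_i^3\bigr)$ for type $H$ and $x=D_K\bigl(\sum_{i=1}^{2m}x_i^3\bigr)$ for type $K$; both lie in $L_1$ and are nonzero, since $D_H$ and $D_K$ are injective. In the Hamiltonian case $D_H\bigl(\sum x_i^3\bigr)=3\sum_i\sigma(i)x_i^2\partial_{i'}$, which realises $x$ inside $W(2m)_1$, and the computation of $[A,x]$ for $A\in\mathfrak{sp}_{2m}\subseteq\mathfrak{gl}_{2m}$ is a variant of the $W$-case: the coefficients of the monomials $x_i^2\partial_q$ recover the off-diagonal entries of $A$, and then the identity forced on the diagonal entries ($a_{i'i'}=2a_{ii}$, hence $3a_{ii}=0$) gives $A=0$. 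In the contact case $[D_K(g),x]=D_K\bigl(\langle g,\sum x_i^3\rangle\bigr)$, so by injectivity of $D_K$ it suffices to show that $\langle g,\sum x_i^3\rangle=0$ forces $g=0$ for $g$ a quadratic in $x_1,\dots,x_{2m}$ plus a multiple $c\,x_{2m+1}$ of the grading generator; using (\ref{3}) and (\ref{4}) one computes $\langle g,\sum x_i^3\rangle$ explicitly, reads the quadratic part off the $x_p^2 x_q$-monomials, and then pins down $c$ from the $x_l^3$-monomials, forcing $c=0$ (again using $p>3$).

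The real work is the type-by-type computation, and the most delicate case is $K$: its grading is non-standard, and $L_0=\mathfrak{csp}_{2m}$ carries the grading element $D_K(x_{2m+1})$, which acts on $L_1$ as the scalar $1$, so that $[A,x]=0$ does not give $[\,\text{quadratic part of }A,x]=0$ outright but only up to a multiple of $x$, and this must be tracked separately throughout. It is worth recording that the hypothesis $p>3$ is used in every type: it guarantees that the small integers ($2$, $3$, hence $6$) appearing as coefficients in these expansions are invertible in $k$.
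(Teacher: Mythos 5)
Your proposal is correct and follows essentially the same strategy as the paper: exhibit an explicit element of $L_1$ for each type and check injectivity of $(\mathrm{ad}\ x)_{\vert L_0}$ by direct computation with the bracket formulas, using the symplectic relation $a_{i'i'}=-a_{ii}$ in type $H$ and the injectivity of $D_K$ in type $K$. Your witnesses for types $H$ and $K$ coincide with the paper's (up to the invertible factor $3$), and although your elements for types $W$ and $S$ differ from the paper's single choice $\sum_{i=1}^{n-1}x_i^2\partial_i-\sum_{i=1}^{n-1}2x_nx_i\partial_n$, the coefficient bookkeeping you describe checks out (in particular the coefficient $-3a_{pq}$ for $q\equiv p+1$ in the cyclic $S$-case and the forced relation $a_{i'i'}=2a_{ii}$ in the $H$-case).
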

\begin{proof}
Assume first that $L\in\{W,S\}$. We set $x=\sum_{i=1}^{n-1} x_i^2\partial_i-\sum_{i=1}^{n-1}2x_nx_i\partial_n\in L_1$. If $y=\sum_{i,j} b_{ij}x_i\partial_j$ is an arbitrary element of $L_0$ and $[x,y]=0$, then we have for $1\leq s\leq n-1$:
\begin{equation}
y\circ x(x_s)=\sum_{i=1}^n2b_{is}x_ix_s
\end{equation}
\begin{equation}
x\circ y(x_s)=\sum_{i=1}^{n-1}b_{is}x_i^2-\sum_{i=1}^{n-1}2b_{ns}x_ix_n
\end{equation}
It follows (since we assume $[x,y]=0$) that $b_{ij}=0$ if $j\neq n$. So $y=\sum_{i=1}^nb_{in}x_i\partial_n$ and:
\begin{equation}
y\circ x(x_n)=\sum_{i=1}^{n-1}\sum_{j=1}^n 2b_{jn}x_i x_j
\end{equation}
\begin{equation}
x\circ y(x_n)=\sum_{j=1}^{n-1} b_{jn}x_j^2-\sum_{i=1}^{n-1}2b_{nn}x_ix_n
\end{equation}
Equating these expressions, we get $b_{jn}=0$ for $1\leq j\leq n$, so $y=0$ and $x$ works like it should.

If $L$ is of type $H$ we set $x=\sum_{i=1}^{2n} \sigma(i)x_i^2\partial_{i'}\in L_1$. Note first that the condition (\ref{Hcon}) with $i=j$ implies that if $y=\sum_{i,j}b_{ij}x_i\partial_j\in L$, then $b_{ii}=-b_{i'i'}$. Again we calculate, for $1\leq s\leq 2n$:
\begin{equation}
y\circ x(x_s)=\sum_{i=1}^{2n} 2\sigma(s')b_{is'}x_ix_{s'}
\end{equation}
\begin{equation}
x\circ y(x_s)=\sum_{i=1}^{2n}\sigma(i')b_{is}x_{i'}^2
\end{equation}
If $[x,y]=0$ then these equations show that all $b_{ij}=0$ (using that $b_{ss}=-b_{s's'}$), so $(\mathrm{ad}\ x)_{\vert L_0}$ is injective, and we are done.

Finally, assume $L$ is of type $K$ and set $x=D_K(\sum_{s=1}^{2m}x_s^3)\in L_1$. An arbitrary element $y$ of $L_0$ has the form $D_K(\sum_{1\leq i\leq j\leq 2m}b_{ij}x_ix_j+cx_n)$. Using formulas (\ref{3}), (\ref{4}) and (\ref{5}) we get:
\begin{align*}
& \langle \sum_{1\leq i\leq j\leq 2m}b_{ij}x_ix_j+cx_n,\sum_{s=1}^{2m}x_s^3\rangle =\\&\sum_{\substack{1\leq i\leq j\leq 2m \\ j\neq i'}}3b_{ij}(\sigma(i)x_jx_{i'}^2+\sigma(j)x_ix_{j'}^2)+\sum_{i=1}^m 3b_{ii'}(x_{i'}^3-x_i^3)+c\sum_{i=1}^{2m} x_i^3 
\end{align*}
Each term in the first sum contains standard basis elements which do not appear anywhere else, so if $[x,y]=0$ all the $b_{ij}$ in the first sum must also be zero (recall that $D_K$ is injective). Then we can look at the last two sums to get $3b_{ii'}=-c=-3b_{ii'}$ for $1\leq i\leq m$, which implies $b_{ii'}=0$. Finally $c$ must also be zero, and we are done. 
\end{proof}

\begin{theorem}\label{finale}
$k[L^*]^G=k$.
\end{theorem}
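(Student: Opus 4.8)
Fix $f\in k[L^*]^G$; we must show that $f$ is constant.

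First I would reduce the problem to understanding $f$ on the subspace $L_0^*$. For any $\chi\in L^*$ the map $t\mapsto f(t.\chi)$ is constant on $k^*$, since $k^*\subseteq G$. If in addition $\chi_-=0$, so that $\chi=\sum_{i\geq 0}\chi_i$, then $t.\chi=\sum_{i\geq 0}t^{-i}\chi_i$, and hence $f(t.\chi)$ is a polynomial in $t^{-1}$; being constant, it equals its value at $t^{-1}=0$, which is $f(\chi_0)$. Thus $f(\chi)=f(\chi_0)$ whenever $\chi_-=0$. Now for an arbitrary $\chi\in Y$ pick $g\in G$ with $(g.\chi)_-=0$; then $f(\chi)=f(g.\chi)=f\bigl((g.\chi)_0\bigr)$, with $(g.\chi)_0\in L_0^*$. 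Since $Y$ is dense in $L^*$ (Lemma \ref{vigtig}) and $f$ is continuous, it therefore suffices to show that $f$ is constant on $L_0^*$.

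Next I would prove this using Lemma \ref{sidst} and Corollary \ref{kor}. Fix $x\in L_1$ with $\mathrm{ad}\,x\colon L_0\to L_1$ injective; then the transpose map $L_1^*\to L_0^*$, $\psi\mapsto\psi\circ\mathrm{ad}\,x$, is surjective. Given $\chi_0,\chi_0'\in L_0^*$ we may assume $\chi_0\neq\chi_0'$ and choose $\psi\in L_1^*$ (necessarily nonzero) with $\psi\circ\mathrm{ad}\,x=\chi_0-\chi_0'$. Set $\eta=\chi_0+\psi\in L_{\leq 1}^*\backslash L_{\leq 0}^*$; by Corollary \ref{kor}, $\chi_0=\eta_0\in\overline{G.\eta}$, so $f(\chi_0)=f(\eta)$. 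By Theorem 1 in \cite{W}, applied to $x\in L_1$ in the same way it is applied in the proof of Lemma \ref{first}, there is $g\in G_1$ with $g^{-1}(y)-y+[x,y]\in L_{\geq 2}$ for all $y\in L_0$. A short computation then gives $(g.\eta)_1=\psi$ and, for $y\in L_0$,
\[
(g.\eta)(y)=\eta\bigl(g^{-1}(y)\bigr)=\chi_0(y)-\psi([x,y])=\bigl(\chi_0-(\chi_0-\chi_0')\bigr)(y)=\chi_0'(y),
\]
so that $g.\eta\in L_{\leq 1}^*\backslash L_{\leq 0}^*$ with $(g.\eta)_0=\chi_0'$. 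Applying Corollary \ref{kor} to $g.\eta$ and using the $G$-invariance of $f$ yields $f(\chi_0')=f(g.\eta)=f(\eta)=f(\chi_0)$, as wanted.

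Putting the two parts together, $f$ is constant on $Y$, hence, by density and continuity, on all of $L^*$, so $k[L^*]^G=k$. I expect the main obstacle to be the step where Lemma \ref{sidst} is converted into an actual automorphism $g\in G_1$ with $g^{-1}(y)\equiv y-[x,y]\pmod{L_{\geq 2}}$ on $L_0$: this must be read off from Wilson's description of the filtration of $G$, rather than from $\exp(\mathrm{ad}\,x)$, which need not be defined since the nilpotency index of $\mathrm{ad}\,x$ may reach $p$ — exactly as the leading-order normal form is extracted in the proof of Lemma \ref{first}. The remaining ingredients are just bookkeeping with the induced grading on $L^*$ together with the elementary fact that a one-variable polynomial constant on $k^*$ is constant.
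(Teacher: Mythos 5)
Your proposal is correct and follows essentially the same route as the paper: density of $Y$ plus the $k^*$-contraction reduce everything to constancy on $L_0^*$, which is then obtained from Lemma \ref{sidst}, Wilson's Theorem 1 and Corollary \ref{kor}. The only (immaterial) difference is that you connect two arbitrary points $\chi_0,\chi_0'\in L_0^*$ directly via $\eta=\chi_0+\psi$, whereas the paper connects each $\chi'\in L_0^*$ to $0$ by starting from a suitable $\chi\in L_1^*$; your closing remark that the automorphism $g\in G_1$ must come from Wilson's description of the filtration rather than from $\exp(\mathrm{ad}\,x)$ matches the paper's usage exactly.
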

\begin{proof} Let $f\in k[L^*]^G$. We show first that $f$ is constant on $L^*_0$: For any $\chi\in L_1^*$ and $g\in G$, we use Corollary \ref{kor} to write:
\begin{equation}\label{snart}
f(0)=f(\chi_0)=f(\chi)=f(g.\chi)=f((g.\chi)_0)
\end{equation}
Let $\chi'\in L_0^*$ be arbitrary. For a basis $\{y_s\}$ of $L_0$ we can find $g\in G_1$ that satisfies
\begin{equation}
g^{-1}(y_s)-y_s-[x,y_s]\in L_{\geq 2}
\end{equation}  
for all $s$, where $x$ is the one from Lemma \ref{sidst}. But the $[x,y_s]$ are linearly independent, so we can choose $\chi\in L_1^*$ that satisfies $\chi([x,y_s])=\chi'(y_s)$ for all $s$. This means that
\begin{equation}
g.\chi(y_s)=\chi(y_s)+\chi([x,y_s])=\chi'(y_s)
\end{equation}
and $(g.\chi)_0=\chi'$. Now (\ref{snart}) gives $f(\chi')=f(0)$.

For any $\chi\in Y$, we can find $g\in G$ such that $(g.\chi)_-=0$ and use the action of $k^*$ to get:
\begin{equation}
\{(g.\chi)_0+t(g.\chi)_1+\dots +t^N(g.\chi)_N\ \vert \ t\in k^*\}\subseteq G.\chi
\end{equation} 
It follows, by taking the limit as $t$ approaches zero, that $(g.\chi)_0\in\overline{G.\chi}$, and so $f(\chi)=f((g.\chi)_0)=f(0)$. We have shown that $f$ is constant on $Y$, and the theorem follows from Lemma \ref{vigtig}. 
\end{proof}
 
It is a consequence of the classification theorem for finite-dimensional simple modular Lie algebras (completed in \cite{PS}) and Chevalley's restriction theorem that the property in Theorem $\ref{finale}$ actually characterises the restricted Cartan type algebras among the simple restricted Lie algebras:
\begin{corollary}
Let $L$ be a restricted simple Lie algebra over a field of characteristic $p>5$ and let $G$ be the identity component of $\mathrm{Aut}(L)$. Then $L$ is of Cartan type if and only if $k[L^*]^G=k$.
\end{corollary}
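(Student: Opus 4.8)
The plan is to establish the two implications separately. The forward one is essentially Theorem~\ref{finale}: if $L$ is of Cartan type then Theorem~\ref{finale} gives $k[L^*]^{\mathrm{Aut}(L)}=k$, and inspecting its proof---together with those of Lemmas~\ref{first} and \ref{vigtig}---one sees that only automorphisms lying in the connected group $G_0\ltimes G_1$ are ever used: the torus $k^*\subseteq G_0$, the unipotent subgroups $G_1,G_2,G_3$, and the one-parameter automorphisms $\mathrm{id}+\mathrm{ad}\,y$ of Lemma~\ref{vigtig}. Hence the very same argument yields $k[L^*]^G=k$ when $G$ is only the identity component of $\mathrm{Aut}(L)$.

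For the converse I would argue by contraposition: assume $L$ is restricted simple but \emph{not} of Cartan type. By the classification of finite-dimensional simple modular Lie algebras (completed in \cite{PS}), over an algebraically closed field of characteristic $p>5$ every restricted simple Lie algebra is either of Cartan type or classical; thus $L$ is classical, say $L=\mathrm{Lie}(\mathbf G)$ up to a central quotient for a simple algebraic group $\mathbf G$, and $G=\mathrm{Aut}(L)^\circ$ contains the adjoint group of $\mathbf G$ acting on $L$ via the adjoint representation.

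It then suffices to produce a single nonconstant element of $k[L^*]^G$. For this one uses that every classical simple Lie algebra in characteristic $p>5$ carries a nondegenerate $G$-invariant symmetric bilinear form $\kappa$: the Killing form whenever it is nondegenerate (for $p>5$ this covers all exceptional types---the tightest constraint coming from $\mathrm{Lie}(E_8)$, whose Killing form degenerates exactly at $p\in\{2,3,5\}$---and the types $A_n,B_n,C_n,D_n$ away from the familiar divisibility exceptions), and otherwise the trace form on the relevant central quotient of $\mathfrak{sl}_n$, respectively the bilinear form afforded by the natural module of $\mathfrak{so}$ or $\mathfrak{sp}$. Since $p\neq 2$ and $\kappa\neq 0$, the quadratic function $q(x)=\kappa(x,x)$ is a nonconstant $G$-invariant polynomial on $L$, and transporting it along the $G$-equivariant isomorphism $L\cong L^*$ induced by $\kappa$ gives a nonconstant element of $k[L^*]^G$. (More strongly, the modular Chevalley restriction theorem---valid for classical Lie algebras when $p>5$---identifies $k[L]^G\cong k[L^*]^G$ with the Weyl-group invariants on a Cartan subalgebra, a polynomial ring in $\mathrm{rank}\,L\geq 1$ variables.) Either way $k[L^*]^G\neq k$, which is the contrapositive.

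The real content of the corollary is already contained in Theorem~\ref{finale}; the remaining work is bookkeeping plus the classification input, and I do not expect a serious new obstacle. The only place where the hypothesis $p>5$ (rather than the standing $p>3$) is truly needed is this classical case: it is precisely the range in which every classical simple Lie algebra still admits a nondegenerate invariant form---equivalently, in which modular Chevalley restriction applies without modification, the obstruction in characteristics $2,3,5$ being the degeneration of the Killing form of $\mathrm{Lie}(E_8)$---and in which the classification list of restricted simple Lie algebras is exhausted by the classical and Cartan type ones. The mildly delicate point is therefore just marshalling these classical facts accurately.
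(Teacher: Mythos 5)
Your proof follows exactly the route the paper indicates (it gives only a one-sentence justification before the corollary: the classification theorem of \cite{PS} plus Chevalley's restriction theorem for the classical case): the forward direction is Theorem~\ref{finale}, and the converse reduces via the classification to exhibiting a nonconstant invariant on the dual of a classical algebra through a nondegenerate invariant form. Your elaboration of the details --- including the observation that $\mathrm{Aut}(L)$ is already connected for Cartan type $L$ since $G_0\ltimes G_1$ is, and the care with $\mathfrak{psl}_n$ and the $E_8$ constraint forcing $p>5$ --- is correct and fills in what the paper leaves implicit.
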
 

\begin{proof}[Acknowledgements] I would like to thank my advisor Professor Jens Carsten Jantzen for providing key ideas and insight.
 \end{proof}


\begin{thebibliography}{0}

\bibitem[1]{BGP} Benkart G., Gregory T., Premet A. (2009): The Recognition Theorem for Graded Lie Algebras in Prime Characteristic. \textit{Memoirs of the American Mathematical Society} 197(920).

\bibitem[2]{BFS} Bois J-M., Farnsteiner R., Shu B. (2012): Weyl Groups for Non-Classical Restricted Lie Algebras and the Chevalley Restriction Theorem. \textit{Forum Mathematicum} (to appear).

\bibitem[3]{SF} Farnsteiner R., Strade H. (1988): \textit{Modular Lie Algebras and Their Representations}. Marcel Dekker.

\bibitem[4]{HZ} Holmes R. R., Zhang C. (2002): Some Simple Modules for the Restricted Cartan-Type Lie Algebras. \textit{Journal of Pure and Applied Algebra} 173(2): 135-165.

\bibitem[5]{M} Mygind M. (2012): Orbit Closures in the Witt Algebra and its Dual Space. \textit{Journal of Algebra and its Applications} (to appear).

\bibitem[6]{P} Premet A. (1992): The Theorem on Restriction of Invariants and Nilpotent Elements in $W_n$. \textit{Mathematics of the USSR Sbornik} 73(1): 135-159.

\bibitem[7]{PSs} Premet A. A., Strade H. (2008): Simple Lie Algebras of Small Characteristic VI. Completion of the Classification. \textit{Journal of Algebra} 320(9): 3559-3604. 

\bibitem[8]{W} Wilson R. L. (1975): Automorphisms of Graded Lie Algebras of Cartan Type. \textit{Communications in Algebra} 3(7): 591-613.

\end{thebibliography}
\end{document}